\begin{document}

\newcommand{\rea}{\mbox{\rams \symbol{'122}}}
\newcommand{\rat}{\mbox{\rams \symbol{'121}}}
\newcommand{\nat}{\mbox{\rams \symbol{'116}}}
\newcommand{\com}{\mbox{\rams \symbol{'103}}}
\newcommand{\inte}{\mbox{\rams \symbol{'132}}}
\newtheorem{theorem}{Theorem}[section]
\newtheorem{definition}{Definition}[section]
\newtheorem{remark}{Remark}[section]
\newtheorem{example}{Example}[section]
\newtheorem{lemma} {Lemma}[section]
\newtheorem{corollary}{Corollary}[section]
\newtheorem{proposition}{Proposition}[section]
\newtheorem{case}{Caso}
\newenvironment{proof}{\par\noindent{\bf
Proof\/}}{\hfill $\Box$ \vskip .3cm}

\title{On Lagrange duality theory for dynamics vaccination games}

\author{Annamaria Barbagallo \\
Department of Mathematics and Applications ``R. Caccioppoli'',\\ University of Naples Federico II,\\
via Cinthia - 80126 Naples, Italy \\
e-mail: annamaria.barbagallo@unina.it \\
Maria Alessandra Ragusa \\
Department of Mathematics and Computer
Science, \\ University of
Catania, \\ Viale A. Doria, 6 - 95125 Catania - Italy \\
e-mail: maragusa@dmi.unict.it}

\date{}
\maketitle

\begin{abstract}
The authors study an infinite dimensional duality theory
finalized to obtain the existence of a strong duality between a
convex optimization problem connected with the management of
vaccinations and its Lagrange dual. Specifically, the authors show
the solvability of a dual problem using as basic tool an hypothesis
known as {\it Assumption S}. Roughly speaking, it requires to show
that a particular limit is nonnegative. This technique improves the previous strong duality results that need the nonemptyness of the interior of the convex ordering cone.
The authors use the duality theory to analyze the dynamic vaccination game
in order to obtain the existence of the Lagrange multipliers related to the
problem and to better comprehend the meaning of the problem.

\bigskip

\noindent {\bf Keywords:} Lagrange
multipliers, Infinite dimensional duality theory, Convex problems.

\end{abstract}

\newcommand{\R}{{\Bbb R}}
\newcommand{\Z}{{\Bbb Z}}

\section{Introduction}

The main goal of the paper is to present general results for the infinite
dimensional duality theory and to utilize them for analyzing dynamic
vaccination games.


The duality theory (see
\cite{DGI,DGM,MauRaciti}) firstly was defined to investigate the problem
of finding, in the infinite dimensional framework, the Lagrange
multipliers connected to an optimization problem or to a
variational inequality subject to possibly nonlinear constraints
when the interior of the ordering cone is
empty. This research line, combined with a generalized constraint
qualification assumption, the well known {\em Assumption S},
ensures the existence of the strong duality between a convex
optimization problem and its Lagrange dual. The employment of the
quasi-relative interior, firstly presented by Borwein and Lewis
\cite{BorwLew}, and the concepts of tangent and normal, cone permit to
go beyond the mentioned difficulty that in a lot of problems the interior of the constraint set is empty. For example, the reader can think about network equilibrium problems, the obstacle problem, the
elastic-plastic torsion problem (see e.g.
\cite{BarbaMau,DG,DonMauMilVit,DGI,DGM,MauRaciti,BarbaDiVi,BarbaDiViPia,BarbaDanMau,DonMil}),
which all use positive cones of Lebesgue or Sobolev spaces. It is useful to point out that {\em Assumption S} is also a necessary condition to ensure the
strong duality (see \cite{BotCseMoldo}). 

In the present work, we focus our attention to archive the dual formulation of the dynamic
vaccination game. In the last decade, several authors devoted their interest in the application of the game
theory to vaccinating behavior under voluntary policies for
human diseases \cite{[4],[5]}, for instance measles, mumps, pertussis and rubella \cite{[2]}. In the above mentioned notes a homogeneous population where all individuals share
the same perception of risk is assumed.
Nevertheless, in real populations, risk perception can fluctuate a lot if we treat different social groups, or units, \cite{[20],[29]}. The paper \cite{[13]} deals with the dynamics of vaccinating behavior in a population
divided into social units, each one having a different perceived risk
of infection and vaccination, under a voluntary policy, via
projected dynamical systems and variational inequalities. 

In [24, 11, 8], it is underlined that, whether or not an individual decides to vaccinate,
the perceived probability of their becoming infected rests on the level of disease  prevalence.
Disease prevalence depends on the vaccine coverage in the population (see [1]).
If vaccination is voluntary, the desease prevalence is the combined effect of the vaccination decisions of other people.
As a consequence, the individuals in a given
population are in practice engaged in a crucial interaction (a
'game') with one another, utilized transmission dynamics. In particular, we stress the importance of the evolution of group's equilibrium vaccinating
strategies in a community divided into social units, each one
having a special perceived risk of infection and vaccination.
Let us consider an infectious disease, for  which vaccination occurs shortly after birth, where parents voluntary choose to vaccinate their children, and in which individuals can be either susceptible, infectious, or immune. These are known as Susceptible-Infectious-Recovered (SIR) models, successfully tested in several epidemiology cases (see for instance \cite{[2]}).

In \cite{[13]} and in
\cite{Coj2008}, population biology games have been firstly studied via
variational inequalities. In \cite{BarbaCoj3} the authors analyze the evolution of the equilibrium strategies of each unit for
a time interval $[0,T]$ and considered $p(t)$ a vaccination
coverage function reflecting a vaccine scare happening in a
population over $[0,T]$. An help to solve this
argument is provided by generalized Nash games. Thanks to the theory of variational
inequalities, Barbagallo and Cojocaru present a method to obtain and compute a solution to this model. At last, in \cite{CojGree},
hybrid system based on a continuous dynamics described by a
projected system and on a finite set of exogenous event times is illustrated. Moreover, such  hybrid system finds an application to track evolution strategies.

We stress that several problems arising from the theory of
population dynamics and other physics phenomena are studied, not only
for partial differential equations (see
e.g. \cite{
R4,R5
}), but also by means of evolutionary
variational inequalities (see for instance \cite{Fragnelli1,
Fragnelli2}). 

The paper is organized as follows. In Section \ref{S:PCR} we recall
the infinite dimensional duality theory. In Section \ref{S:APP} we
present the dynamic vaccination game and show how Lagrange and
duality theory can be applied to this model.
After that, we are able to prove the existence of the Lagrange multipliers associated to the problem,
which is  very useful to study  the behaviour of the vaccination game.

\section{Lagrange duality}\label{S:PCR}

Let us fix $X$ a topological set, $Y$ a real normed space ordered
by a convex cone $ C $ with dual space $Y^*$, $Z$  a real
normed space and $Z^*$ its dual space. Let us consider $S$ a convex subset of
$X$, $f:S \to \R $ a functional,  $g:S \to Y $ a
mapping and $h:S \to Z $ an affine-linear mapping. Moreover, let us set
\begin{equation}\label{1}
K \,=\, \{ x \in S\,:\, g(x)\in -\,C,\, h(x)\,=\,\theta_Z \},
\end{equation}
where $\theta_Z $ is the zero element in the space $Z. $

Let us analyze the optimization problem to find $x_0 \in K $
such that
\begin{equation}\label{2}
f(x_0)\,=\, \min_{x \in K } f(x).
\end{equation}
Let us remark that its Lagrange dual problem is
\begin{equation}\label{3}
\max_{u \in C^*\!, v \in Z^*} \,\, \inf_{x \in S} \,\{f(x)\,+\,
\langle u, g(x)\rangle\,+\, \langle  v, h(x) \rangle \}
\end{equation}
where we indicate the usual dual cone of $C$ by
\begin{equation}\label{4}
C^*\,=\, \{ u \in Y^*\,:\, \langle u,y \rangle \geq 0, \forall y \in
C \}.
\end{equation}
Let us now recall the definition of {\em Assumption S}.
Fixed three functions  $f, g, h$ as before and  $K $ as in
(\ref{1}), we declare that {\em Assumption S} is verified at a point
$x_0 \in K $ if and only if
\begin{equation}\label{7}
T_{\tilde M} (0,\theta_Y, \theta_Z)\,\cap (\,]- \infty,
0[\times\,\{\theta_Y \}\times\,\{\theta_Z \}\,)\,=\,\emptyset,
\end{equation}
being
\begin{equation}\label{8}
{\tilde M}= \{ (f(x)-f(x_0)+ \alpha, g(x)+y, h(x)): \ x \in
S\backslash K, \ \alpha \geq 0, \ y \in C  \}.
\end{equation}
Throughout the exposition we indicate by $T_C(x)$ the tangent cone to $C$ at $x$ defined as
\begin{eqnarray}\label{S}
T_C(x)= \{ g \in X: && g\,=\,\lim_{n \to \infty } \lambda_n\, (
x_n\,-\, x), \lambda_n \in \R^+ , \forall n \in N,
\\ && x_n \in C, \forall  n \in N, \lim_{n \to \infty} x_n\,=\,x\}.
\nonumber
\end{eqnarray}

We are now enable to report the primary development on strong duality theory.

\begin{theorem}\label{thm1}(see \cite{DGM})
Let us assume $f:S \to \R, $ $g:S \to Y $ two convex functions,
 $h:S \to Z $ an affine-linear map and  {\em Assumption S} is fulfilled in the optimal solution $x_0 \in K $
to (\ref{2}), thus, problem (\ref{3}) is resolvable and, whenever ${\bar u}
\in C^*, $ $ {\bar v} \in Z^* $ are the optimal solutions to
(\ref{3}), we purchase
\begin{equation}\label{9}
\langle  {\bar u}, g(x_0)\rangle\,=\,0
\end{equation}
 and the optimal values of (\ref{3}) coincides, specifically
\begin{equation}\label{10}
f(x_0)= \max_{u \in C^*, v \in Z^*} \inf_{x \in S} \left(  f (x) +
\langle { u},g(x)\rangle + \langle { v},h(x)\rangle \right).
\end{equation}
\end{theorem}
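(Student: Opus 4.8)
The plan is to derive the result from two ingredients: an elementary weak-duality estimate, and a separation argument carried out in the product space $\R\times Y\times Z$, in which the role usually played by an interior point of the ordering cone is taken over by \emph{Assumption S}. First I would record weak duality: for every $u\in C^*$ and $v\in Z^*$, evaluating the inner infimum in \eqref{3} at $x_0$ and using $g(x_0)\in-C$, $h(x_0)=\theta_Z$ gives $\langle u,g(x_0)\rangle\le0$ and $\langle v,h(x_0)\rangle=0$, so
\[
\inf_{x\in S}\{f(x)+\langle u,g(x)\rangle+\langle v,h(x)\rangle\}\le f(x_0).
\]
Hence the value of \eqref{3} is at most $f(x_0)$, and the whole task is to exhibit a pair $(\bar u,\bar v)\in C^*\times Z^*$ attaining this bound with equality.

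Next I would introduce the convex set
\[
M=\{(f(x)-f(x_0)+\alpha,\ g(x)+y,\ h(x)):x\in S,\ \alpha\ge0,\ y\in C\}\subseteq\R\times Y\times Z,
\]
whose convexity follows from convexity of $f$ and $g$, affine-linearity of $h$, and convexity of $S$, $C$ and $[0,\infty)$. Note that $(0,\theta_Y,\theta_Z)\in M$ (take $x=x_0$, $\alpha=0$, $y=-g(x_0)\in C$). Optimality of $x_0$ shows $M$ cannot meet $\,]-\infty,0[\times\{\theta_Y\}\times\{\theta_Z\}$, since a common point would produce a feasible $x\in K$ with $f(x)<f(x_0)$. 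The bridge to \emph{Assumption S} is the observation that any element of $M$ with negative first coordinate must arise from some $x\in S\backslash K$ (because $\alpha\ge0$ forces $f(x)<f(x_0)$, hence $x\notin K$); consequently every tangent direction of $M$ at the origin with negative first coordinate is also a tangent direction of $\tilde M$, so \eqref{7} yields $(-1,\theta_Y,\theta_Z)\notin T_M(0,\theta_Y,\theta_Z)$. Because $S\backslash K$ need not be convex, $\tilde M$ itself may fail to be convex; I would therefore perform the separation on the convex set $M$ and invoke $\tilde M$ only through the tangent-cone hypothesis \eqref{7}.

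The main step is then a separation via the Hahn--Banach theorem. Since $M$ is convex and contains the origin, $T_M(0,\theta_Y,\theta_Z)$ is a closed convex cone not containing the point $(-1,\theta_Y,\theta_Z)$; strictly separating this point from the cone yields a nonzero functional $(\theta,\bar u,\bar v)\in\R\times Y^*\times Z^*$ that is nonnegative on the cone, hence on $M$, and strictly negative at $(-1,\theta_Y,\theta_Z)$, which forces $\theta>0$. I expect the strict positivity of $\theta$ to be the decisive obstacle: it is exactly here that \eqref{7} is used, and without it the separation could collapse to $\theta=0$, leaving no genuine multipliers. Letting $y$ range over $C$ and $\alpha$ over $[0,\infty)$ in the resulting inequality
\[
\theta(f(x)-f(x_0)+\alpha)+\langle\bar u,g(x)+y\rangle+\langle\bar v,h(x)\rangle\ge0
\]
then forces $\bar u\in C^*$ (from the $y$-directions) and is consistent with $\theta\ge0$ (from the $\alpha$-direction).

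Finally I would normalize $\theta=1$. Choosing $\alpha=0$ and $y=\theta_Y$ gives $f(x)+\langle\bar u,g(x)\rangle+\langle\bar v,h(x)\rangle\ge f(x_0)$ for every $x\in S$, so the infimum in \eqref{3} at $(\bar u,\bar v)$ is at least $f(x_0)$; together with weak duality this proves \eqref{10} and shows that $(\bar u,\bar v)$ solves \eqref{3}, so the dual problem is resolvable. For the complementarity relation I would then argue for an arbitrary optimal dual pair: by the strong duality just established the corresponding infimum equals $f(x_0)$, so evaluating it at $x=x_0$ gives $\langle\bar u,g(x_0)\rangle\ge0$, while $\bar u\in C^*$ and $g(x_0)\in-C$ force $\langle\bar u,g(x_0)\rangle\le0$; hence \eqref{9} holds, completing the plan.
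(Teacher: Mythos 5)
Your argument is correct and is essentially the standard proof of this theorem: the paper itself states the result without proof, citing \cite{DGM}, and the separation route you describe --- working with the convex extension $M$ (with $x\in S$), observing that \emph{Assumption S} on the non-convex $\tilde M$ excludes $(-1,\theta_Y,\theta_Z)$ from $T_M(0,\theta_Y,\theta_Z)$ because any generating sequence with eventually negative first component must come from $S\backslash K$, separating the point from the closed convex cone $T_M(0,\theta_Y,\theta_Z)=\overline{\mathrm{cone}}\,M$, and using the tangent-cone hypothesis precisely to force a non-vertical hyperplane $\theta>0$ --- is the argument of \cite{DGI,DGM}. The normalization, the identification of $\bar u\in C^*$ from the $y$-directions, the combination with weak duality to get \eqref{10}, and the two-sided estimate giving \eqref{9} for an arbitrary optimal dual pair all match the cited proof.
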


{\em Assumption S} is archivable to accomplish  that a suitable limit is
larger or equals than zero.  The present requirement is necessary in order to gain strong duality
(see \cite{BotCseMoldo}).

It is time to formalize the coming Lagrange functional
\begin{
equation}\label{Lagrange_functional}
\mathcal{L} (x,u,v) = f(x) + \langle { u},g(x)\rangle + \langle {
v},h(x)\rangle, \quad \forall x \in S, \ u \in C^*, \ v \in Z^*.
\end{
equation}
Consequently, $(\ref{10})$ can be updated as follows
\begin{displaymath}
f(x_0) = \max_{u \in C^*, v \in Z^*} \inf_{x \in S} \mathcal{L}
(x,u,v).
\end{displaymath}
It develops, from what just considered, the below property

%
%
\begin{theorem}\label{thm2}(see \cite{DGI}).
Let us assume that the same the assumptions of Theorem \ref{thm1} are satisfied. Because of these, $x_0
\in K $ is an optimal solution of (\ref{2}) if and only if
there exist ${\bar u} \in C^* $ and ${\bar v} \in Z^* $ such that
$(x_0, {\bar u}, {\bar v}) $ is a saddle point of \eqref{Lagrange_functional}, equivalently
\begin{equation}\label{12}
\mathcal{L} (x_0,u,v)\leq  \mathcal{L} (x_0,\bar u,\bar v) \leq
\mathcal{L} (x,\bar u,\bar v), \,\,\forall x \in S, \forall u \in
C^*, \forall v \in Z^*.
\end{equation}
In addition
\begin{equation}\label{13}
\langle  {\bar u}, g(x_0) \rangle\,=\,0.
\end{equation}
\end{theorem}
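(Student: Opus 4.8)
The plan is to prove the two implications separately, leaning on Theorem \ref{thm1} only for the nontrivial (forward) direction; the converse is elementary and does not use Assumption S.

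\emph{Forward direction.} Assume $x_0 \in K$ solves (\ref{2}). Since $f,g$ are convex, $h$ is affine-linear, and Assumption S holds at $x_0$, Theorem \ref{thm1} applies: problem (\ref{3}) is solvable, an optimal pair $(\bar u, \bar v) \in C^* \times Z^*$ satisfies the complementarity relation (\ref{9}), i.e.\ $\langle \bar u, g(x_0)\rangle = 0$, and strong duality (\ref{10}) holds, so that $f(x_0) = \inf_{x \in S} \mathcal{L}(x, \bar u, \bar v)$. I would then read off the two saddle inequalities directly. For the right-hand inequality, $x_0 \in K$ gives $h(x_0) = \theta_Z$, which together with (\ref{9}) yields $\mathcal{L}(x_0, \bar u, \bar v) = f(x_0)$; strong duality then gives $\mathcal{L}(x_0, \bar u, \bar v) = f(x_0) = \inf_{x \in S}\mathcal{L}(x, \bar u, \bar v) \le \mathcal{L}(x, \bar u, \bar v)$ for every $x \in S$. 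For the left-hand inequality, fix $u \in C^*$ and $v \in Z^*$; feasibility of $x_0$ gives $h(x_0)=\theta_Z$ and $g(x_0) \in -C$, whence $\langle v, h(x_0)\rangle = 0$ and $\langle u, g(x_0)\rangle \le 0$ by definition (\ref{4}) of $C^*$, so $\mathcal{L}(x_0, u, v) = f(x_0) + \langle u, g(x_0)\rangle \le f(x_0) = \mathcal{L}(x_0, \bar u, \bar v)$. This simultaneously re-establishes (\ref{13}).

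\emph{Backward direction.} Conversely, suppose $(x_0, \bar u, \bar v)$ is a saddle point of $\mathcal{L}$. Cancelling $f(x_0)$ in the left inequality yields $\langle u, g(x_0)\rangle + \langle v, h(x_0)\rangle \le \langle \bar u, g(x_0)\rangle + \langle \bar v, h(x_0)\rangle$ for all $u \in C^*$, $v \in Z^*$. Letting $v$ range over the whole linear space $Z^*$ forces $h(x_0) = \theta_Z$, since otherwise scaling a functional $v_0$ with $\langle v_0, h(x_0)\rangle > 0$ (supplied by Hahn--Banach) makes the left side unbounded. Replacing $u$ by $\bar u + t w$ with $w \in C^*$ and $t \ge 0$ gives $\langle w, g(x_0)\rangle \le 0$ for every $w \in C^*$, i.e.\ $g(x_0)$ lies in the negative bipolar of $C$; as $C$ is a closed convex cone, this means $g(x_0) \in -C$, so $x_0 \in K$. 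Taking $u$ to be the zero functional shows $\langle \bar u, g(x_0)\rangle \ge 0$, while $g(x_0)\in -C$ and $\bar u \in C^*$ give the reverse, hence (\ref{13}). Finally, the right inequality restricted to $x \in K \subseteq S$, combined with $\langle \bar u, g(x)\rangle \le 0$ and $h(x)=\theta_Z$, gives $f(x_0) = \mathcal{L}(x_0,\bar u,\bar v) \le \mathcal{L}(x,\bar u,\bar v) \le f(x)$ for all $x \in K$, so $x_0$ solves (\ref{2}).

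\emph{Where the difficulty sits.} The forward direction carries all the genuine content, but it is entirely outsourced to Theorem \ref{thm1}: once strong duality (\ref{10}) and the complementarity (\ref{9}) are in hand, the two inequalities are immediate algebra using $x_0 \in K$. The only genuinely delicate point lies in the backward direction, where recovering the primal constraint $g(x_0) \in -C$ from the dual inequality $\langle w, g(x_0)\rangle \le 0$ for all $w \in C^*$ invokes the bipolar theorem and hence the standing hypothesis that $C$ is a closed convex cone; I would flag this closedness explicitly, together with the Hahn--Banach step used to pin down $h(x_0)=\theta_Z$.
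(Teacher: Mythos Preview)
The paper does not give its own proof of this theorem: it is stated as a preliminary result with a citation to \cite{DGI}, and Section~\ref{S:APP} begins immediately after the statement. So there is nothing in the paper to compare against.

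Your argument is the standard one and is correct. The forward direction is exactly how one reads off the saddle inequalities from strong duality plus complementarity, and the backward direction is the classical computation. Two small remarks. First, in the backward direction you could sidestep the bipolar/closedness issue entirely by observing that the theorem is phrased as ``$x_0\in K$ is an optimal solution if and only if \ldots'': membership $x_0\in K$ is already part of the standing data (indeed Assumption~S is formulated at a point of $K$), so one only needs the right-hand saddle inequality to deduce optimality, and the complementarity $\langle \bar u,g(x_0)\rangle=0$ then follows from the left-hand inequality with $u=0$ and $u=2\bar u$. Second, your flag about closedness of $C$ is apt if one insists on recovering $g(x_0)\in -C$ from the dual inequality; in the applications of this paper $C$ is the nonnegative cone of an $L^2$ space, which is closed, so the point is harmless here.
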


\section{Applications to dynamic vaccination games}\label{S:APP}

We introduce the dynamic vaccination game and investigate about its dual formulation, in order to show the existence of
the Lagrange multipliers associated to the problem.

We regard a competition being played between population aggregations, possessing the
distinctness that we program to ensure a reply match in $[0,T]$. Our preamble is that at any time interval $t \in [0,T]$
there is a community of fixed proportion $N(t)$, however, we acquire $N(t) = N$ (it is comparable to affirm that communities that have the same  total measurement $N$ of parents of newborns are observed in $[0,T]$). At
each $t \in [0,T]$, the persons are observed with respect to
their respective perceptions of the relative risks of vaccine vs.
infection, scilicet $r_i$. Hence, we acquire a split of the
community of parents into agglomerations of proportions $\epsilon_i (t)$,
being $\displaystyle \sum^k_{i=1} \epsilon_i(t) = 1$, wherever
$\epsilon_i(t) \neq 0$ or 1 requesting precisely $k$ groups or units.
Unit $i$'s chance to become vaccinated at time t is indicated
by $P_i(t)$. Lastly, we assume that each unit's dimension
may change time to time and we formulate the proportions as functions
$\epsilon_i: [0,T] \to [0,1]$. Therefore each unit is thought to have
a vaccination strategy class of functions in the followin set
\begin{displaymath}
\mathbb{K}_i(t)= \{ P_i \in L^2([0,T],\mathbb{R}): \ P_i(t) \in
[0,1] \ {\rm almost\,\,everywhere \ in} \ [0,T] \}.
\end{displaymath}

Let us presume that all components of a unit $i$ have equivalent
 probability of significant morbidity due to vaccination,
indicated by $r^i_v(t)$, a.e. in $[0,T]$. Their chance
of becoming infected, given that a proportion $p$ of the community
is vaccinated, is indicated by $\pi^i_p(t)$, a.e. in $[0,T]$. In order to have a more general situation, we suppose that the perceived probability of becoming infected depends on the unit i's probability of getting vaccinated, namely $\pi^i_p=\pi^i_p(P(t))$.
Let us indicate their probability of considerable morbidity upon
infection by $r^i_{inf}(t)$, a.e. in $[0,T]$. The total
probability of experiencing an appreciable morbidity because
of not vaccinating is thus $\pi^i_p(P(t)) r^i_{inf}(t)$, a.e. in
$[0,T]$. At last, let us indicate by $r_i(t)=\displaystyle
\frac{r^i_v(t)}{r^i_{inf}(t)}$, a.e. in $[0,T]$, the relative
risk of vaccination versus infection of unit $i$. In addition, we postulate that the population consists of parents of newborns
and each group $i$ of parents is made up of individuals whose risk
assessments $r_i(t)$, a.e. in $[0,T]$, are moderately close.


As a consequence, we associate to each unit a payoff $u_i : [0,T]
\times L^2([0,T],\mathbb{R}) \to L^2([0,T ],\mathbb{R})$ given by
\begin{displaymath}
u_i(t, P(t)) =-r_i(t) P_i (t) - \pi^i_p(P(t))(1 - P_i(t)), \quad {\rm
a.e. \ in} \ [0, T],
\end{displaymath}
whilst the constraint set $\mathbb{K}$ turns into
\begin{displaymath}
\mathbb{K} = \left\{P \in L^2([0,T], \mathbb{R}^k): \ 0 \leq P_i(t)
\leq 1, \ \forall i \in \{1, \ldots, k\}, \ {\rm a.e. \ in} \ [0,T ]
\right\}.
\end{displaymath}
The vaccine coverage is now come to be a function $p : [0,T]
\to [0,1]$, estimated from the model as $\sum^k_{j=1}
\epsilon_j(t)P_j (t) = p(t)$, stated the action of the population
units. We express a time-dependent game of vaccination approach
over $\mathbb{K}$.

\begin{definition}\label{E:equilibrium}
An equilibrium state of a game is a vaccination strategy vector
$P^*\in L^2([0, T],\mathbb{R}^k)$,  that occurs
\begin{equation}\label{(6)}
P^*_i(t) \in \mathbb{K}_i(t): \ \ u_i(t,P^*(t)) \geq u_i(t,P_i (t),
\widehat{P}^*_i(t)), \quad \forall P_i(t) \in \mathbb{K}_i(t), \
\forall i=1, \ldots, k,
\end{equation}
wherever $\widehat{P}^*_i(t)= (P^*_1(t), \ldots, P^*_{i-1}(t), P^*_{i+1}(t),
\ldots, P^*_k(t))$.
\end{definition}

Let us suppose that the following assumptions on the payoff functions $u_i$ and on
$\nabla u=\left(\dfrac{\partial u_1}{\partial P_1}, \ldots,
\dfrac{\partial u_k}{\partial P_k}\right)$ are satisfied:
\begin{enumerate}[(I)]
\renewcommand{\theenumi}{\roman{enumi}}
\item $u_i(t,P(t))$ is continuously differentiable a.e. in $\left[0,T\right]$,
\item $\nabla u$  is a Carath\'{e}odory
function such that
\begin{equation}\label{condcarath}
\exists h \in L^2(\left[0,T\right]):\left\|\nabla
u(t,P(t))\right\|_{mn} \leq h(t)\left\|P(t)\right\|_{mn},\; \mbox{
a.e. in} \left[0,T\right], \, \forall P \in L^2([0,T],
\mathbb{R}^k),
\end{equation}
\item $u_i(t,P(t))$ is pseudoconcave with respect to $P_i$, $i=1,\ldots,k,$ that is, for a.e. in $[0,T]$:
\begin{displaymath}
\langle \frac{\partial u_i}{\partial P_i}(t,P_1, \ldots, P_i, \ldots
P_k), P_i - Q_i \rangle \geq 0
\end{displaymath}
\begin{displaymath}
\Longrightarrow \ u_i(t, P_1, \ldots, P_i, \ldots P_k) \geq u (t,
P_1, \ldots, Q_i, \ldots P_k).
\end{displaymath}
\end{enumerate}

We underline that the time-dependent vaccination equilibrium is
characterized by means of an evolutionary variational inequality, as we put in evidence in
the successive result (see \cite{BarbaCoj3}, Theorem 3.2):

\begin{theorem}
	Let us assume that assumptions (i), (ii) and (iii) are satisfied.
A time-dependent vaccination equilibrium is reached, in accord with Definition
$\ref{E:equilibrium}$, if and only if it satisfies the evolutionary variational inequality
\begin{equation}\label{(7)}
\int^T_0 \langle - \nabla u(t, Q(t)), P(t) - Q(t) \rangle dt \geq 0,
\quad \forallP \in K.
\end{equation}
\end{theorem}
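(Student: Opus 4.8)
The plan is to establish the stated equivalence exactly as in the classical identification of a (generalized) Nash equilibrium with a variational inequality, the only real care being the passage between the pointwise-in-$t$ conditions of Definition~\ref{E:equilibrium} and the integral inequality \eqref{(7)}. I write $Q$ for the candidate strategy vector and observe that, since $\nabla u=\left(\frac{\partial u_1}{\partial P_1},\ldots,\frac{\partial u_k}{\partial P_k}\right)$ is the diagonal pseudo-gradient, the integrand splits coordinatewise as $\langle -\nabla u(t,Q(t)),P(t)-Q(t)\rangle=\sum_{i=1}^k\langle -\frac{\partial u_i}{\partial P_i}(t,Q(t)),P_i(t)-Q_i(t)\rangle$. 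This decomposition, together with the product structure of $\mathbb{K}$ (independent box constraints on each coordinate, imposed a.e. in $t$), is what lets me test each unit separately. Assumption (ii) guarantees, via the Carath\'eodory and growth conditions \eqref{condcarath}, that $-\nabla u(\cdot,Q(\cdot))\in L^2([0,T],\mathbb{R}^k)$, so that all the integrals below are finite and the pointwise/integral transfer is legitimate.

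For the implication ``equilibrium $\Rightarrow$ \eqref{(7)}'', I fix $Q$ satisfying \eqref{(6)} and a single unit $i$. Condition \eqref{(6)} says precisely that, for a.e. $t$, $Q_i(t)$ maximizes $P_i\mapsto u_i(t,P_i,\widehat{Q}_i(t))$ over the convex set $\mathbb{K}_i(t)$. Using the differentiability assumption (i), I differentiate $\lambda\mapsto u_i\big(t,Q_i(t)+\lambda(P_i(t)-Q_i(t)),\widehat{Q}_i(t)\big)$ at $\lambda=0^+$; since $\lambda=0$ is a maximizer on $[0,1]$, the right derivative is nonpositive, giving $\langle \frac{\partial u_i}{\partial P_i}(t,Q(t)),P_i(t)-Q_i(t)\rangle\le 0$ a.e. Summing over $i$, changing sign, and integrating over $[0,T]$ then yields \eqref{(7)}.

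For the converse ``\eqref{(7)} $\Rightarrow$ equilibrium'', which I expect to be the main obstacle, I start from a solution $Q$ of the variational inequality and must recover the pointwise inequalities. Here I exploit the freedom of test functions: fixing a unit $i$, a feasible $P_i$, and a measurable set $E\subseteq[0,T]$, I insert the admissible competitor that coincides with $Q$ outside coordinate $i$ and outside $E$ and equals $(P_i,\widehat{Q}_i)$ on $E$; the coordinatewise splitting collapses \eqref{(7)} to $\int_E \langle -\frac{\partial u_i}{\partial P_i}(t,Q(t)),P_i(t)-Q_i(t)\rangle\,dt\ge 0$. As $E$ is arbitrary, a standard localization (fundamental lemma of the calculus of variations / Lebesgue-point argument) produces $\langle \frac{\partial u_i}{\partial P_i}(t,Q(t)),P_i(t)-Q_i(t)\rangle\le 0$ a.e. Finally I invoke pseudoconcavity (iii) at the base point $Q(t)$, with $Q_i$ in the role of the point where the gradient is computed and $P_i$ the competing strategy: the inequality just derived is exactly the hypothesis $\langle \frac{\partial u_i}{\partial P_i}(t,Q(t)),Q_i(t)-P_i(t)\rangle\ge 0$ of (iii), so its conclusion gives $u_i(t,Q(t))\ge u_i(t,P_i(t),\widehat{Q}_i(t))$ a.e., i.e. \eqref{(6)}. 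The delicate points are checking that the inserted competitor indeed lies in $\mathbb{K}$ and justifying the localization within the $L^2$ framework secured by (ii).
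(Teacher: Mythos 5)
The paper does not actually prove this theorem: it is quoted from \cite{BarbaCoj3} (Theorem 3.2), so there is no internal proof to compare against. Your argument is the standard equivalence between a (diagonal) Nash equilibrium and a variational inequality, and it is essentially correct: necessity via the sign of the one-sided derivative at the pointwise maximizer $Q_i(t)$ of $P_i\mapsto u_i(t,P_i,\widehat Q_i(t))$ on $[0,1]$, sufficiency via localization of \eqref{(7)} to a single coordinate on an arbitrary measurable set $E$ followed by pseudoconcavity (iii) applied at the base point $Q(t)$ with the roles of the two arguments swapped (your reading of (iii) here is the right one). The only point worth tightening is the passage in the converse from ``for each fixed $P_i\in\mathbb K_i$ the inequality $\frac{\partial u_i}{\partial P_i}(t,Q(t))\,(P_i(t)-Q_i(t))\le 0$ holds a.e.'' (null set depending on $P_i$) to the pointwise statement ``for a.e.\ $t$ and \emph{every} admissible value in $[0,1]$'' required by Definition \ref{E:equilibrium}; this is closed either by testing with the two constant functions $P_i\equiv 0$ and $P_i\equiv 1$ (which already forces $\frac{\partial u_i}{\partial P_i}(t,Q(t))\le 0$ where $Q_i(t)<1$ and $\ge 0$ where $Q_i(t)>0$, hence the sign condition for all $\lambda\in[0,1]$), or by a countable dense family of constants plus continuity. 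With that remark added, the proof is complete and coincides with the approach of the cited reference.
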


We keep in mind that, in the Hilbert space $L^2([0, T],\mathbb{R}^k)$,
\begin{displaymath}
\ll \phi, y \gg= \int^T_0 \langle \phi(t), y(t) \rangle dt
\end{displaymath}
is the duality mapping, being $\phi \in (L^2([0, T],\mathbb{R}^k))^*
= L^2([0, T],\mathbb{R}^k)$ and $y \in L^2([0, T],\mathbb{R}^k)$.

Our purpose is to acquire the existence of solution to \eqref{(7)}. To this aim we take into account some preliminaries (see \cite{MauRac}).

\begin{definition}
If $\nabla u= \left(\frac{\partial u_1}{\partial P_1}, \ldots,
\frac{\partial u_k}{\partial P_k}\right)$ is as before. We say that
\begin{itemize}
\item the mapping $- \nabla u$ is pseudomonotone in the
sense of Karamardian (briefly K-pseudomonotone) iff for every
$P, Q\in\mathbb{K}$
$$\ll - \nabla u(Q),P-Q
\gg \geq 0 \Rightarrow
\ll - \nabla u(P) ,P-Q
\gg \geq 0;$$
\item the mapping $- \nabla u$ is pseudomonotone in the sense of Brezis (briefly B-pseudomonotone) iff
\begin{enumerate}
\item for each sequence $\{P_n\}$ weakly converging to u (in short $P_n\rightharpoonup P$)
in $\mathbb{K}$ and such that $\lim\sup_n
\ll
- \nabla u(P_n),P_n-Q
\gg\leq 0, $
it ensures that
$$\liminf_{n \to + \infty}
\ll - \nabla u(P_n),P_n-Q
\gg\geq
\ll
- \nabla u(P),P-Q
\gg,\quad\forall Q\in\mathbb{K};$$
\item for each $Q\in\mathbb{K}$ the function $P \mapsto
\ll - \nabla u(P),P-Q
\gg$
is lower bounded on the bounded subset of $\mathbb{K};$
\end{enumerate}
\item the mapping $- \nabla u$ is lower hemicontinuous
along line segments, iff the function $\xi \mapsto
\ll
- \nabla u(\xi),P-Q
\gg$ is lower semicontinuous for every
$P,Q\in\mathbb{K}$ on the line segments $\left[P,Q\right];$
\item the mapping $-\nabla u$ is hemicontinuous in the
sense of Fan (briefly F-hemicontinuous) iff the
function $P \mapsto
\ll - \nabla u(P),P-Q
\gg$ is weakly
lower semicontinuous on $\mathbb{K}$, for every $Q\in\mathbb{K}$.
\end{itemize}
\end{definition}

Finally, we can explain the existence result that can be achived making use of some arguments contained in \cite{MauRac}.

\begin{theorem}
	Let us assume that assumptions (i), (ii) and (iii) are satisfied.
If $- \nabla u$ is B-pseudomonotone,  or F-hemicontinuous or $- \nabla u$
is K-pseudomonotone,
then $(\ref{(7)})$
admits a solution.
\end{theorem}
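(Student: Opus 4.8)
The plan is to recognize (\ref{(7)}) as a Stampacchia variational inequality over the set $\mathbb{K}$ in the Hilbert space $E=L^2([0,T],\mathbb{R}^k)$, governed by the operator $A=-\nabla u$ acting through the Nemytskii map $(AP)(t)=-\nabla u(t,P(t))$, and then to invoke the appropriate existence theorem for each of the three alternative hypotheses. Since $E$ is a Hilbert space, it is reflexive, and we identify $E^*$ with $E$ through the duality pairing $\ll\phi,y\gg=\int_0^T\langle\phi(t),y(t)\rangle\,dt$ recalled above. The strategy is thus to check that $\mathbb{K}$ carries the right geometry and that $A$ carries the right continuity, so that the machinery of \cite{MauRac} applies verbatim.

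First I would establish the structural properties of $\mathbb{K}$. The set is nonempty (for instance $P\equiv 0\in\mathbb{K}$) and convex, since the pointwise box constraints $0\le P_i(t)\le 1$ are stable under convex combinations. It is bounded, because $\|P\|_{L^2}^2=\int_0^T\sum_{i=1}^k P_i(t)^2\,dt\le kT$ for every $P\in\mathbb{K}$. It is closed, since strong $L^2$-convergence of a sequence in $\mathbb{K}$ yields a.e.\ convergence of a subsequence, whence the bounds pass to the limit. Being a closed, convex, bounded subset of a reflexive space, $\mathbb{K}$ is weakly compact. This weak compactness is the geometric backbone of every case below and, crucially, removes the need for any coercivity hypothesis.

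Second, I would use assumption (ii) to make $A=-\nabla u$ a well-behaved operator on $\mathbb{K}$. The Carath\'eodory property together with the growth bound $\|\nabla u(t,P(t))\|_{mn}\le h(t)\|P(t)\|_{mn}$ with $h\in L^2$ guarantees that the associated Nemytskii operator maps $L^2([0,T],\mathbb{R}^k)$ into itself and is continuous from the strong to the strong topology. Under the identification $E^*\cong E$, this renders $A$ a continuous map $\mathbb{K}\to E^*$; in particular the scalar function $\xi\mapsto\ll A(\xi),P-Q\gg$ is continuous, hence lower semicontinuous, along every segment $[P,Q]$, so $A$ is lower hemicontinuous along line segments. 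This is exactly the regularity that must accompany K-pseudomonotonicity.

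With these preliminaries the three cases are dispatched by the corresponding existence result of \cite{MauRac}. If $A$ is B-pseudomonotone, Brezis's theorem applies directly on the weakly compact convex set $\mathbb{K}$ and produces a solution $Q^*$ of (\ref{(7)}). If $A$ is F-hemicontinuous, the map $P\mapsto\ll A(P),P-Q\gg$ is weakly lower semicontinuous for each fixed $Q$, and a Fan--KKM argument over the weakly compact $\mathbb{K}$ yields the solution. If $A$ is K-pseudomonotone, I would combine it with the lower hemicontinuity along segments obtained from (ii): the Minty lemma then makes (\ref{(7)}) equivalent to the Minty inequality $\ll A(P),P-Q\gg\ge 0$ for all $P\in\mathbb{K}$, whose solution set is an intersection of weakly closed convex subsets of $\mathbb{K}$ enjoying the finite-intersection property, hence nonempty by weak compactness. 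The main obstacle, I expect, is this second step: transferring the pointwise Carath\'eodory and growth information in (ii) into the precise (weak) semicontinuity statements each theorem demands --- notably lower hemicontinuity along segments in the K-pseudomonotone case and weak lower semicontinuity of $P\mapsto\ll A(P),P-Q\gg$ in the F-hemicontinuous case. Once those functional-analytic facts are pinned down, the existence itself is a direct citation of the Maugeri--Raciti framework.
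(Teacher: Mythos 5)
Your proposal is correct and follows essentially the same route as the paper, which gives no written proof but simply invokes the existence theorems of Maugeri--Raciti on the weakly compact convex set $\mathbb{K}$ and notes, exactly as you do, that condition (ii) supplies the lower hemicontinuity along line segments needed in the K-pseudomonotone case. Your write-up merely fills in the details (weak compactness of $\mathbb{K}$, the Nemytskii operator, the Minty argument) that the paper leaves to the cited reference.
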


Let us note that (\ref{condcarath}) of (ii) guarantees the lower semicontinuity along line segments of
$- \nabla u$. As a consequence, assuming that
$- \nabla u$ is a K-pseudomonotone operator, then the existence
of a solution is ensured without entering supplementary assumptions besides
(\ref{condcarath}).


Now we apply the infinite dimensional duality theory to the dynamic
vaccination game. For this intent, let $
Q \in \mathbb{K}$ be a solution of $(\ref{(7)})$ and let us set up
\begin{displaymath}
\psi (P) = \ll - \nabla u(Q), P - Q \gg, \quad \forall P \in
\mathbb{K},
\end{displaymath}
that is
\begin{displaymath}
\psi (P) = \int_0^1 - \sum_{i=1}^k \frac{\partial
u_i(t,Q(t))}{\partial P_i} (P_i(t) - Q_i(t)) dt, \quad \forall P \in
\mathbb{K}.
\end{displaymath}
Moreover, let us note
\begin{displaymath}
\psi (P) \geq 0 \quad \forall P \in \mathbb{K}
\end{displaymath}
and
\begin{equation}\label{E:star}
\min_{P \in \mathbb{K}} \psi(P) = \psi(Q)=0.
\end{equation}
We combine $(\ref{(7)})$ and the subsequent Lagrange functional:
\begin{displaymath}
\mathcal{L}(P, \alpha, \beta) = \psi(P) - \ll \alpha, P \gg + \ll
\beta, P - 1 \gg,
\end{displaymath}
for every $P \in L^2([0,T], \mathbb{R}^{k}), \ (\alpha, \beta) \in
C^*$, being
\begin{eqnarray*}
C^* = \Big\{ \big( \alpha, \beta \big) \in L^2([0,T],
\mathbb{R}^{k}) \times L^2([0,T], \mathbb{R}^{k}): \ \ \alpha(t)
\geq 0, \ \beta(t) \geq 0, \ \ && \\ {\rm a. e. \ in} \ [0, T] \Big\}
&&
\end{eqnarray*}
the dual cone of the ordering cone $C$ of $L^2([0,T],
\mathbb{R}^{k}) \times L^2([0,T], \mathbb{R}^{k})$. We point out that, in our
case, $C=C^*$. 
We are ready to demonstrate that our problem complies {\em Assumption S}.
To purchase this goal, we need to state a preparatory lemma.

\begin{lemma}
Let us assume that assumptions (i), (ii) and (iii) are satisfied.
Let $P \in \mathbb{K}$ be a solution of $(\ref{(7)})$ and fix, for $i=1,2, \ldots, m, $
\begin{displaymath}
E^{i}_- = \{ t \in [0, T]: \ Q_{i}(t) = 0 \}, \ \ E^{i}_0 = \{ t \in
[0, T]: \ 0 < Q_{i}(t) < 1 \},
\end{displaymath}
\begin{displaymath}
E^{i}_+ = \{ t \in [0, T]: \ Q_{i}(t) = 1 \}.
\end{displaymath}
Then, we reach
\begin{displaymath}
\frac{\partial u_i(t,Q(t))}{\partial P_{i}} \leq 0, \ \ {\rm a.e. \ in} \
E_-^{i}, \ \quad \frac{\partial u_i(t,Q(t))}{\partial P_{i}} = 0, \ \
{\rm a.e. \ in} \ E_0^{i},
\end{displaymath}
\begin{displaymath}
\frac{\partial u_i(t,Q(t))}{\partial P_{i}} \geq 0, \ \ {\rm a.e. \ in} \
E_+^{i}.
\end{displaymath}
\end{lemma}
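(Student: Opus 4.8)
The plan is to characterize the variational inequality $(\ref{(7)})$ pointwise and then test it against carefully chosen competitors in $\mathbb{K}$ supported on each of the three sets. Since the excerpt defines $\psi(P)=\ll-\nabla u(Q),P-Q\gg=\int_0^T\langle-\nabla u(t,Q(t)),P(t)-Q(t)\rangle\,dt$ and $Q$ solves $(\ref{(7)})$, I would first record the equivalent integral inequality
\begin{equation*}
\int_0^T\sum_{i=1}^k\Big(-\frac{\partial u_i(t,Q(t))}{\partial P_i}\Big)(P_i(t)-Q_i(t))\,dt\ge 0,\qquad\forall P\in\mathbb{K}.
\end{equation*}
The key observation is that the constraint set $\mathbb{K}$ is a box, so it decouples both in the index $i$ and, crucially, in the time variable $t$: membership $P\in\mathbb{K}$ amounts to the pointwise conditions $0\le P_i(t)\le 1$ a.e., with no coupling across distinct times. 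This lets me localize the global integral inequality to an a.e.-pointwise inequality.

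The central step is the localization argument. Fix an index $i$ and a measurable subset $A\subseteq[0,T]$; define a competitor $P$ that agrees with $Q$ in every coordinate and at every time, except on $A$ in the $i$-th coordinate, where I set $P_i(t)=Q_i(t)+\varepsilon\,\varphi(t)$ for a suitable sign. Because all other contributions cancel, the variational inequality collapses to
\begin{equation*}
\int_A\Big(-\frac{\partial u_i(t,Q(t))}{\partial P_i}\Big)\varphi(t)\,dt\ge 0.
\end{equation*}
On $E_0^i$, where $0<Q_i(t)<1$, for small $\varepsilon>0$ both signs $P_i=Q_i\pm\varepsilon\varphi$ remain feasible, so I can run the test with $\varphi\ge 0$ and with $-\varphi$; the two inequalities together force $\int_A(-\partial u_i/\partial P_i)\varphi\,dt=0$ for every nonnegative $\varphi$ supported in $E_0^i$, whence $\partial u_i(t,Q(t))/\partial P_i=0$ a.e.\ on $E_0^i$ by the fundamental lemma of the calculus of variations. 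On $E_-^i$, where $Q_i(t)=0$, only the increase $P_i=\varepsilon\varphi\in[0,1]$ is feasible (decreasing would violate $P_i\ge 0$), so I obtain only the one-sided inequality $\int_A(-\partial u_i/\partial P_i)\varphi\,dt\ge 0$ with $\varphi\ge0$, giving $-\partial u_i/\partial P_i\ge0$, i.e.\ $\partial u_i(t,Q(t))/\partial P_i\le0$ a.e.\ on $E_-^i$. Symmetrically, on $E_+^i$ where $Q_i(t)=1$, only the decrease $P_i=1-\varepsilon\varphi$ is feasible, yielding $\partial u_i(t,Q(t))/\partial P_i\ge0$ a.e.\ on $E_+^i$. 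This produces exactly the three claimed sign conditions.

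The main obstacle I anticipate is purely the measure-theoretic bookkeeping needed to pass rigorously from the global integral inequality to the pointwise conclusion: one must verify that the competitors $P$ are genuinely admissible (in $L^2$ and obeying the box constraints, which is where feasibility of only one sign on $E_-^i$ and $E_+^i$ enters), and then invoke the fundamental lemma of the calculus of variations to convert ``$\int_A(\cdot)\varphi\ge0$ for all admissible $\varphi$'' into an a.e.\ sign statement on the sets $E_-^i,E_0^i,E_+^i$, which are themselves measurable by continuity of $Q_i$ in the $L^2$ sense and definition as level sets. Assumption (ii), the Carath\'eodory growth condition $(\ref{condcarath})$, guarantees that $\partial u_i(\cdot,Q(\cdot))/\partial P_i\in L^2([0,T])$, so all the integrals above are well defined and the localization is legitimate; the differentiability in (i) ensures the derivatives exist a.e. None of this is deep, but the argument must be stated with care so that the direction of each one-sided variation matches the corresponding boundary case of the box. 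The rest is routine.
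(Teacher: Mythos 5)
Your proposal is correct and follows essentially the same route as the paper: freeze all coordinates $l\neq i$ at $Q_l$, split $[0,T]$ into $E^i_-$, $E^i_0$, $E^i_+$, and test the variational inequality with one-sided perturbations on $E^i_\pm$ and two-sided ones on $E^i_0$ (the paper phrases the final step as a contradiction on a positive-measure set $F$ rather than via the fundamental lemma, which is the same argument). The only point to state with care is the feasibility of $Q_i\pm\varepsilon\varphi$ on $E^i_0$, where $\min(Q_i,1-Q_i)$ has no uniform positive lower bound; take $\varphi\le\min(Q_i,1-Q_i)$ or exhaust $E^i_0$ by the sets $\{1/n<Q_i<1-1/n\}$, a routine fix that the paper itself glosses over with ``using similar arguments.''
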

\begin{proof}
In order to advantage the reader, we report the used technique and some details (see
also \cite{BarbaMau}, Lemma 4.7). We emphasize that
\begin{eqnarray*}
\ll - \nabla u (Q), P- Q \gg &=& - \int_0^T
\sum_{i=1}^k  \frac{\partial u_i(t, Q(t))}{\partial P_{i}} (P_{i}(t) - Q_{i}(t)) dt \\
&=& - \sum_{i=1}^k \int_{E^{i}_-} \frac{\partial
u(t, Q(t))}{\partial P_{i}} P_{i}(t) dt \\
&& - \sum_{i=1}^k \int_{E^{i}_0} \frac{\partial
u_i(t, Q(t))}{\partial P_{i}} (P_{i}(t) - Q_{i}(t)) dt \\
&& - \sum_{i=1}^k \int_{E^{i}_+} \frac{\partial u_i(t,
Q(t))}{\partial P_{i}} (P_{i}(t) - 1) dt \geq 0.
\end{eqnarray*}
Let us presuppose that $P_{l}(t) = Q_{l}(t)$ for $l \neq i$, we get,
for every $0 \leq P_{i}(t) \leq 1$ and every $i=1,2, \ldots, m$,
\begin{eqnarray}\label{E:dual1}
\ll - \nabla u (Q), P- Q \gg &=& - \int_{E^{i}_-} \frac{\partial
u_i(t, Q(t))}{\partial P_{i}} P_{i}(t) dt \nonumber
\\ && - \int_{E^{i}_0} \frac{\partial u_i(t, Q(t))}{\partial P_{i}}
(P_{i}(t) - Q_{i}(t)) dt \\ && - \int_{E^{i}_+} \frac{\partial
u_i(t, Q(t))}{\partial P_{i}} (P_{i}(t) - 1) dt \geq 0. \nonumber
\end{eqnarray}
We pick out that
\begin{displaymath}
P_{i}
\begin{cases}
\geq 0 \quad {\rm in} \ E_-^{i} \\
= Q_{i} \quad {\rm in} \ E_0^{i} \\
= Q_{i} \quad {\rm in} \ E_+^{i}
\end{cases},
\end{displaymath}
so that $(\ref{E:dual1})$ could be written as follows:
\begin{equation}\label{E:dual2}
- \int_{E^{i}_-} \frac{\partial u_i(t, Q(t))}{\partial P_{i}}
P_{i}(t)  dt \geq 0.
\end{equation}
Our ``finishing line"  is to show that  $\dfrac{\partial u_i(t, Q(t))}{\partial P_i} \leq
0$, a.e. in $E^i_-$. To this purpose, if there exists a subset $F$ of
$E_-^{i}$, with $m(F)>0$, such that $- \dfrac{\partial
u_i(t,Q(t))}{\partial P_{i}} < 0$ in $F$, picking
\begin{displaymath}
P_{i}
\begin{cases}
= 0 \quad {\rm in} \ E_-^{i} \setminus F \\
> 0 \quad {\rm in} \ F
\end{cases},
\end{displaymath}
it results
\begin{displaymath}
- \int_F \frac{\partial u_i(t, Q(t))}{\partial P_{i}} P_{i}(t) dt <
0,
\end{displaymath}
that contradicts $(\ref{E:dual2})$. The product effect is that
\begin{displaymath}
\frac{\partial u_i(t, Q(t))}{\partial P_{i}} \leq 0, \quad {\rm in}
\ E_-^{i}.
\end{displaymath}
Using similar arguments, we reach
\begin{displaymath}
\frac{\partial u_i(t, Q(t))}{\partial P_{i}} = 0, \quad {\rm in} \
E_0^{i}.
\end{displaymath}
In $E_+^{i}$, selecting
\begin{displaymath}
P_{i}
\begin{cases}
= 0 \quad {\rm in} \ E_-^{i} \\
= Q_{i} \quad {\rm in} \ E_0^{i} \\
\leq 1 \quad {\rm in} \ E_+^{i}
\end{cases},
\end{displaymath}
we gain
\begin{displaymath}
- \int_{E_+^{i}} \frac{\partial u_i(t, Q(t))}{\partial P_{i}}
(P_{i}(t) - 1) dt \geq 0.
\end{displaymath}
When $P_{i} - 1 \leq 0$ in $E_+^{i}$ then we prove that $\dfrac{\partial u_i(t,
Q(t))}{\partial P_{i}} \geq 0$. As a matter of fact, if there exists $F
\subseteq E_+^{i}$, with $m(F) >0$, such that $\dfrac{\partial u_i(t,
Q(t))}{\partial P_{i}} < 0$ in $F$, considering
\begin{displaymath}
P_{i}
\begin{cases}
= 1 \quad {\rm in} \ E_+^{i} \setminus F \\
< 1 \quad {\rm in} \ F
\end{cases},
\end{displaymath}
it follows
\begin{displaymath}
- \int_F \frac{\partial u_i(t, Q(t))}{\partial P_{i}} P_{i}(t) dt <
0.
\end{displaymath}
As an effect,
\begin{displaymath}
\frac{\partial u_i(t, Q(t))}{\partial P_{i}} \geq 0, \quad {\rm in}
\ E_+^{i}.
\end{displaymath}
\end{proof}

Now we are ready to announce the main result of the article.

\begin{theorem}
Let us assume that assumptions (i). (ii) and (iii) are satisfied.
Problem $(\ref{E:star})$ satisfies {\em Assumption S} at the minimal
point $P \in \mathbb{K}$.
\end{theorem}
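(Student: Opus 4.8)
The plan is to unravel \emph{Assumption S} for the concrete data of problem $(\ref{E:star})$ and then reduce it to a single nonnegativity statement. Here the abstract objects are realized by $f=\psi$, which is affine-linear (hence convex) on $S=L^2([0,T],\mathbb{R}^k)$, by the constraint map $g(P)=(-P,\,P-1)$ (so that $g(P)\in-C$ reads exactly $0\le P_i\le1$), by $h\equiv\theta_Z$ (there is no equality constraint), and by the ordering cone $C=C^*$ of nonnegative pairs; moreover $f(x_0)=\psi(Q)=0$ by $(\ref{E:star})$. Writing $v_i(t)=-\frac{\partial u_i(t,Q(t))}{\partial P_i}$, the preparatory Lemma supplies the pointwise sign information $v_i\ge0$ a.e. on $E^i_-$, $v_i=0$ a.e. on $E^i_0$, and $v_i\le0$ a.e. on $E^i_+$, while assumption (ii) together with $0\le Q_i\le1$ yields $v\in L^2([0,T],\mathbb{R}^k)$, which will legitimize the Cauchy--Schwarz estimates below.

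I would argue by contradiction. Suppose that \emph{Assumption S} fails at $Q$, that is, the intersection in $(\ref{7})$ is nonempty; then there is $l<0$ with $(l,\theta_Y,\theta_Z)\in T_{\tilde M}(0,\theta_Y,\theta_Z)$. Unfolding the definition $(\ref{S})$ of the tangent cone applied to $\tilde M$ in $(\ref{8})$, this produces sequences $P_n\in L^2([0,T],\mathbb{R}^k)$, scalars $a_n\ge0$, functions $y_{1,n},y_{2,n}\ge0$ a.e., and multipliers $\lambda_n>0$ such that, setting $c_n=\psi(P_n)+a_n$, one has $c_n\to0$, $-P_n+y_{1,n}\to0$ and $P_n-1+y_{2,n}\to0$ in $L^2$, while $\lambda_n c_n\to l$, $\lambda_n(-P_n+y_{1,n})\to0$ and $\lambda_n(P_n-1+y_{2,n})\to0$ in $L^2$. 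Since $a_n\ge0$ and $\lambda_n>0$ give $\lambda_n c_n\ge\lambda_n\psi(P_n)$, it suffices to prove $\liminf_n\lambda_n\psi(P_n)\ge0$: this already forces $l\ge0$, the desired contradiction.

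The core of the argument is this last liminf. I would split $\lambda_n\psi(P_n)=\int_0^T\sum_i v_i(t)\,\lambda_n\big(P_{n,i}(t)-Q_i(t)\big)\,dt$ according to the partition $E^i_-\cup E^i_0\cup E^i_+$ of $[0,T]$ for each $i$. On $E^i_0$ the integrand vanishes since $v_i=0$. On $E^i_-$, where $Q_i=0$, I write $\lambda_n P_{n,i}=\lambda_n y_{1,n,i}-\big(\lambda_n(-P_n+y_{1,n})\big)_i$; the first piece is a product of nonnegatives because $v_i\ge0$ there, and the contribution of the second is bounded by $\|v_i\|_{L^2}\,\|\lambda_n(-P_n+y_{1,n})\|_{L^2}\to0$. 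On $E^i_+$, where $Q_i=1$, I write $\lambda_n(P_{n,i}-1)=\big(\lambda_n(P_n-1+y_{2,n})\big)_i-\lambda_n y_{2,n,i}$; since $v_i\le0$ and $\lambda_n y_{2,n,i}\ge0$ there, the term $-v_i\,\lambda_n y_{2,n,i}\ge0$, while the remaining term again tends to $0$ by Cauchy--Schwarz. Summing the three contributions over $i$ gives $\liminf_n\lambda_n\psi(P_n)\ge0$, hence $l\ge0$, a contradiction, so \emph{Assumption S} holds at $Q$.

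The step I expect to be the real obstacle is the bookkeeping in the previous paragraph: one must simultaneously exploit the \emph{pointwise} sign of $v_i$ furnished by the Lemma and the \emph{asymptotic} sign of $\lambda_n P_{n,i}$, respectively $\lambda_n(P_{n,i}-1)$, imposed by the two constraint convergences, and match them region by region so that every remaining cross term is an $L^2$ inner product of the fixed function $v$ against a sequence tending to zero. The affine-linearity of $\psi$ and the membership $v\in L^2$ are precisely what make these cross terms vanish; once the partition and the two decompositions of $\lambda_n P_n$ are set up correctly, the rest is routine.
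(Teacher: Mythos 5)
Your proposal is correct and follows essentially the same route as the paper: identify the sign of $-\partial u_i/\partial P_i$ on $E^i_-$, $E^i_0$, $E^i_+$ via the preparatory lemma, then decompose $\lambda_n P^n_i$ (resp.\ $\lambda_n(P^n_i-1)$) using the cone elements from the tangent-cone sequences so that each remaining cross term is either pointwise nonnegative or an $L^2$ pairing against a sequence tending to zero. The only differences are cosmetic (arguing by contradiction instead of directly showing $l\ge 0$, and making the $v\in L^2$ justification for Cauchy--Schwarz explicit, which the paper leaves implicit).
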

\begin{proof}
We remember that
\begin{displaymath}
\psi (
P) = \ll - \nabla u(Q), P - Q \gg, \quad \forall
P \in
\mathbb{K}.
\end{displaymath}
To reach {\em Assumption S}, we demonstrate that if $(l,
\theta_{L^2([0,T], \mathbb{R}^{k})}, \theta_{L^2([0,T],
\mathbb{R}^{k})})$ belongs to $T_{\widetilde{M}}(0,
\theta_{L^2([0,T], \mathbb{R}^{k})}, \theta_{L^2([0,T],
\mathbb{R}^{k})})$, specifically if
\begin{eqnarray*}
l &=& \lim_{n \to + \infty} \lambda_n
(
\psi(
P_n) -
\psi(
Q) + \alpha_n
), \\
\theta_{L^2([0,T], \mathbb{R}^{k})} &=& \lim_{n \to + \infty}
\lambda_n
(-
P_n + R_n), \\
\theta_{L^2([0,T], \mathbb{R}^{k})} &=& \lim_{n \to + \infty} 
\lambda_n (P_n -1 + S_n),
\end{eqnarray*}
with $\lambda_n > 0$, $0= \lim_{n \to + \infty}
(\psi(P_n) - \psi(Q) + \alpha_n)$, $\theta_{L^2([0,T], \mathbb{R}^{k})}= \lim_{n
\to + \infty} 
(-P_n + R_n)
$,
$\theta_{L^2([0,T], \mathbb{R}^{k})} =
\lim_{n \to + \infty}
(
P_n(t) - 1 + S_n)$, $
P_n \in L^2([0,T],
\mathbb{R}^{k}) \setminus \mathbb{K}$, $\alpha_n \geq 0$, $R_n \in
C$, then $l \geq 0$.

We have
\begin{eqnarray*}
l &=& \lim_{n \to + \infty} \lambda_n \Big( - \sum_{i=1}^k \int_0^T
\frac{\partial u_i(t, Q(t))}{\partial P_{i}} (P_{i}^n(t) - Q_{i}(t))
dt
+ \alpha_n \Big) \\
&\geq& \lim_{n \to + \infty} \lambda_n \Big( - \sum_{i=1}^k
\int_{E_-^{i}} \frac{\partial u_i(t, Q(t))}{\partial
P_{i}} P_{i}^n(t) dt \\
&&- \sum_{i=1}^k \int_{E_0^{i}} \frac{\partial u_i(t,
Q(t))}{\partial
P_{i}} (P_{i}^n(t) - Q_{i}(t)) dt \\
&& - \sum_{i=1}^k \int_{E_+^{i}} \frac{\partial u_i(t,
Q(t))}{\partial P_{i}} (P_{i}^n(t) - 1) dt \Big).
\end{eqnarray*}
Furthermore we notice that
\begin{eqnarray*}
&&\lim_{n \to + \infty} \lambda_n \Bigg[ - \sum_{i=1}^k
\int_{E_-^{i}} \frac{\partial u_i(t, Q(t))}{\partial P_{i}}
P_{i}^n(t) dt \Bigg]
\\ && \quad = \lim_{n \to + \infty} \lambda_n \Bigg[ \sum_{i=1}^k
\Bigg( - \int_{E_-^{i}} \frac{\partial u_i(t, Q(t))}{\partial
P_{i}} (P_{i}^n(t) - R_{i}^n(t)) dt \\
&& \quad \ \ \ \  - \int_{E_-^{i}} \frac{\partial u_i(t,
Q(t))}{\partial P_{i}} R_{i}^n(t) dt \Bigg) \Bigg] \geq 0,
\end{eqnarray*}
because of $\lim_{n \to + \infty} \lambda_n (
P_{i}^n - R_{i}^n)= \theta_{L^2([0,T], \mathbb{R})}, \ {\rm in} \
L^2([0,T], \mathbb{R}), \quad R_{i}^n \geq 0, \quad \lambda_n \geq
0$, $\dfrac{\partial u_i(t,Q(t))}{\partial P_{i}} \leq 0$ in
$E^{i}_-$.

In addition, we observe that
\begin{displaymath}
\lim_{n \to + \infty} \lambda_n \sum_{i=1}^k \int_{E_0^i}
\frac{\partial u_i(t, Q(t))}{\partial P_{i}} (P_{i}^n(t) - Q_{i}(t))
dt = 0,
\end{displaymath}
being $\frac{\partial u_i(t,Q(t))}{\partial P_{i}} = 0$ in $E^i_0$.

Since $\lim_{n \to + \infty} \lambda_n (Q_{i}^n + S_{i}^n - 1)=0, \
{\rm in} \ L^2([0,T], \mathbb{R}), \quad \lambda_n \geq 0, \quad
S_{i}^n \geq 0$, $\dfrac{\partial u_i(t,Q(t))}{\partial P_{i}}
\geq 0$, in $E^{i}_+$,
it results
\begin{eqnarray*}
&& \lim_{n \to + \infty} \lambda_n \sum_{i=1}^k \int_{E_+^{i}} -
\frac{\partial u_i(t, Q(t))}{\partial P_{i}} (P_{i}^n(t) - 1) dt
\\ && \quad = \lim_{n \to + \infty} \lambda_n \sum_{i=1}^k
\int_{E_+^{i}} - \frac{\partial u_i(t, Q(t))}{\partial P_{i}}
(P_{i}^n(t) + S_{i}^n(t) - 1 - S_{i}^n(t)) dt
\\ && \quad = \lim_{n \to + \infty} \sum_{i=1}^k \int_{E_+^{i}} - \frac{\partial u_i(t, Q(t))}{\partial P_{i}}
\lambda_n (P_{i}^n(t) + S_{i}^n(t) - 1) dt \\
&& \quad \ \ + \lim_{n \to + \infty} \lambda_n \sum_{i=1}^k
\int_{E_+^{i}} \frac{\partial u_i(t, Q(t))}{\partial P_{i}}
S_{i}^n(t) dt \geq 0.
\end{eqnarray*}
Hence, the claim is completely achived.
\end{proof}

Let us now prove a necessary and sufficient condition concerning solutions to $(\ref{(7)})$.

\begin{theorem}\label{T:trenove}
	Let us assume that assumptions (i), (ii) and (iii) are satisfied.
$P \in \mathbb{K}$ is a solution to
$(\ref{(7)})$ if and only if there exist $\alpha^*, \beta^* \in
L^2([0,T], \mathbb{R}^{k})$ such that:
\begin{itemize}
\item[(I)] $\alpha^*(t), \beta^*(t) \geq 0$, a.e. in $[0,T]$;
\item[(II)] $\alpha^*(t) Q(t) =0$, a.e. in $[0,T]$,
\\$\beta^*(t) (Q(t) - 1) =0$, a.e. in $[0,T]$;
\item[(III)] $ - \nabla u(t,Q(t)) + \beta^*(t) = \alpha^*(t)$, a.e. in $[0,T]$.
\end{itemize}
\end{theorem}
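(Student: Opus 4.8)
The plan is to reduce the statement to the strong-duality apparatus already at our disposal. By the preceding theorem, \emph{Assumption S} holds at the minimal point $Q$ of problem $(\ref{E:star})$; moreover $\psi$ is affine, hence convex, and the constraints cutting out $\mathbb{K}$ are affine-linear, so the hypotheses of Theorems \ref{thm1} and \ref{thm2} are met with $f=\psi$, with $g(P)=(-P,\,P-1)$ playing the role of the inequality constraint valued in $L^2([0,T],\mathbb{R}^k)\times L^2([0,T],\mathbb{R}^k)$, and with no equality constraint.

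For the necessity direction I would proceed as follows. Since $Q$ solves the variational inequality $(\ref{(7)})$, by $(\ref{E:star})$ it minimizes $\psi$ over $\mathbb{K}$, so Theorem \ref{thm2} furnishes multipliers $(\bar\alpha,\bar\beta)\in C^*$ making $(Q,\bar\alpha,\bar\beta)$ a saddle point of $\mathcal{L}$; membership in $C^*$ is exactly condition (I). The complementarity identity $(\ref{13})$, which here reads $-\ll\bar\alpha,Q\gg+\ll\bar\beta,Q-1\gg=0$, combined with $\bar\alpha\geq0$, $Q\geq0$, $\bar\beta\geq0$, $Q-1\leq0$, forces each of the two nonpositive contributions to vanish and then each nonnegative integrand to vanish almost everywhere, yielding the pointwise complementarity (II). Finally, the second saddle inequality states that $Q$ minimizes the affine functional $P\mapsto\mathcal{L}(P,\bar\alpha,\bar\beta)$ over all of $L^2([0,T],\mathbb{R}^k)$; an affine functional bounded below on the entire space must have vanishing linear part, so its gradient is zero at $Q$, which is precisely $-\nabla u(Q)-\bar\alpha+\bar\beta=0$, i.e.\ condition (III) with $\alpha^*=\bar\alpha$ and $\beta^*=\bar\beta$.

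For the sufficiency direction I would compute directly. Given $\alpha^*,\beta^*$ satisfying (I)--(III), for arbitrary $P\in\mathbb{K}$ I substitute $-\nabla u(Q)=\alpha^*-\beta^*$ from (III) into $\ll-\nabla u(Q),P-Q\gg$ and split into the $\alpha^*$ and $\beta^*$ parts. Using (II) to discard $\ll\alpha^*,Q\gg$ and to replace $\ll\beta^*,Q\gg$ by $\ll\beta^*,1\gg$, the pairing collapses to $\ll\alpha^*,P\gg+\ll\beta^*,1-P\gg$, which is nonnegative because $\alpha^*,\beta^*\geq0$ and $0\leq P\leq1$ on $\mathbb{K}$. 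This establishes $(\ref{(7)})$, so $Q$ solves the variational inequality.

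I expect the delicate point to lie in the necessity direction, specifically in deducing the pointwise conditions (II) from the single scalar equation $(\ref{13})$: one must first observe that the two terms $-\ll\bar\alpha,Q\gg$ and $\ll\bar\beta,Q-1\gg$ are \emph{individually} nonpositive, so their sum can vanish only if each does, and then invoke the fact that a nonnegative integrand with vanishing integral is zero almost everywhere. The stationarity step leading to (III) likewise requires the observation that the infimum over the whole space $L^2([0,T],\mathbb{R}^k)$ of the affine Lagrangian is finite only when its linear part vanishes identically.
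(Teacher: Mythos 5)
Your proposal is correct and follows essentially the same route as the paper: invoke Theorem \ref{thm2} via the previously verified \emph{Assumption S}, read off (I) from $(\alpha^*,\beta^*)\in C^*$, obtain (II) by splitting the complementarity identity into two individually nonpositive terms and then localizing the one-signed integrands, and obtain (III) from the fact that the affine Lagrangian is minimized over all of $L^2([0,T],\mathbb{R}^k)$ (the paper implements your ``vanishing linear part'' observation by testing with $Q\pm\varepsilon$, $\varepsilon\in C_0^\infty$). Your converse is a direct verification of the variational inequality rather than the paper's appeal to the saddle-point characterization of Theorem \ref{thm2}, but this is only a cosmetic difference and, if anything, makes that step more explicit than the paper's.
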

\begin{proof}
Making use of Theorem $\ref{thm2}$ there exists $(\alpha^*, \beta^*) \in C^*$
such that $(Q, \alpha^*, \beta^*)$ is a saddle point of the
functional $\mathcal{L}$, namely
\begin{equation}\label{E:undici}
\mathcal{L}(Q, \alpha, \beta) \leq \mathcal{L}(Q, \alpha^*, \beta^*)
\leq \mathcal{L}(P, \alpha^*, \beta^*), \quad \forall (\alpha,
\beta) \in C^*, \ P \in L^2([0,T], \mathbb{R}^{k}),
\end{equation}
and, in addition,
\begin{eqnarray}\label{E:dodici}
\ll \alpha^*, Q \gg &=& 0, \\
\ll \beta^*, Q - 1 \gg &=& 0.
\end{eqnarray}
Since $\alpha, \beta \geq 0$, $Q \geq 0$, $Q - 1 \leq 0$, a.e. in $[0,T]$,
$(\ref{E:dodici})$ implies
\begin{eqnarray*}
\alpha^*(t) Q(t) &=& 0, \quad {\rm a.e.\ in}
\ [0,T], \\
\beta^*(t) (Q(t) - 1) &=& 0, \quad {\rm a.e. \ in} \ [0,T].
\end{eqnarray*}
Thanks to $(\ref{E:undici})$ we obtain, for every $P \in L^2([0,T],
\mathbb{R}^{k})$,
\begin{eqnarray*}
\mathcal{L}(P, \alpha^*, \beta^*) &=& \ll - \nabla u(Q), P - Q \gg
- \ll \alpha^*, P \gg + \ll \beta^*, P - 1 \gg \\
&\geq& 0 = \mathcal{L}(Q, \alpha^*, \beta^*).
\end{eqnarray*}
Bearing in mind $(\ref{E:dodici})$, we reach
\begin{displaymath}
\ll - \nabla u(Q)- \alpha^* + \beta^*, P - Q \gg \geq 0, \quad
\forall P \in L^2([0,T], \mathbb{R}^{k}).
\end{displaymath}
Let us fix
\begin{displaymath}
P^1= Q + \varepsilon, \quad P^2 = Q - \varepsilon, \quad \forall
\varepsilon \in C^{\infty}_0([0,T], \mathbb{R}^{k}),
\end{displaymath}
we acquire, for every $\varepsilon \in C^{\infty}_0([0,T],
\mathbb{R}^{k})$,
\begin{eqnarray}\label{E:epsilon}
\mathcal{L}(
P^1, \alpha^*, \beta^*) &=& - \ll - \nabla u(Q)-
\alpha^* + \beta^*, \varepsilon \gg \geq 0, \\
\mathcal{L}(
P^2, \alpha^*, \beta^*) &=& \ll - \nabla u(Q)- \alpha^*
+ \beta^*, \varepsilon \gg \geq 0.
\end{eqnarray}
As a consequence, by $(\ref{E:epsilon})$, we derive, for every
$\varepsilon \in C_0^{\infty}([0,T])$:
\begin{displaymath}
\ll - \nabla u(Q)- \alpha^* + \beta^*, \varepsilon \gg = 0,
\end{displaymath}
precisely, we gain
\begin{equation}\label{E:risultato}
- \nabla u(t,Q(t))- \alpha^*(t) + \beta^*(t) =0, \quad {\rm a.e. \
in} \ [0,T].
\end{equation}

Vice versa, if there exists $Q \in \mathbb{K}$, $\alpha^* \in
L^2([0,T], \mathbb{R}^k)$ and $\beta^* \in L^2([0,T], \mathbb{R})$
that satisfy assumptions (I), (II) and (III), it arises that $(Q,
\alpha^*, \beta^*)$ is a saddle point of $\mathcal{L}$. Then, reminding  Theorem $\ref{thm2}$, we affirm that  $Q$ is a solution to $(\ref{(7)})$.
\end{proof}

Recalling the above proved Theorem \ref{T:trenove}, we can emphasize the
importance of the Lagrange multipliers $\alpha^*$ and $\beta^*$ on
the understanding and the management of the vaccination game
problem. Indeed, at a fixed time $t \in [0,T]$, it emerges that
\begin{enumerate}[(a)]
\item if $\alpha^*(t)>0$ then, using (II), we have
$Q(t)=0$, namely the group $i$'s probability of getting vaccinated
is null;
\item if $Q(t)>0$ then, bearing in mind (II), $\alpha^*(t)=0$
and, making use of (III), it results $\nabla u_i(t,Q(t)) =
\beta^*(t),$ namely $\beta^*(t)$ is equal to the marginal payoff;
\item if $\beta^*(t)>0$ then, from (II), we obtain
$Q(t)=1$, namely the group $i$'s probability of getting vaccinated
is maximum;
\item if $Q(t)<1$ then, from (II), $\beta^*(t)=0$
and, taking into account (III), we obtain $- \nabla u_i(t,Q(t))=
\alpha^*(t),$ namely $\beta^*(t)$ is equal to the opposite of the
marginal payoff.
\end{enumerate}

\section*{Acknowledgment}
We wish to warmly thank Professor A. Maugeri for the proficuous discussions.


\begin{thebibliography}{99}


\bibitem{[2]}
R.M. Anderson, R.M. May, Infectious Diseases of Humans, Oxford
University Press, Oxford, 1991.



\bibitem{BarbaCoj3}
A. Barbagallo,  M.-G. Cojocaru, Dynamic vaccination games and
variational inequalities on time-dependent sets, J. Biol. Dynam. 4
(2010) 539--558.


\bibitem{BarbaDiVi}
A. Barbagallo, R. Di Vincenzo, Lipschitz continuity and duality for
dynamic oligopolistic market equilibrium problem with memory term,
J. Math. Anal. Appl. 382 (2011) 231--247.


\bibitem{BarbaMau}
A. Barbagallo, A. Maugeri, Duality theory for the dynamic
oligopolistic market equilibrium problem, Optim. 60 (2011) 29--52.


%
%

\bibitem{BarbaDanMau}
A. Barbagallo,  P. Daniele, A. Maugeri, Variational formulation for
a general dynamic financial equilibrium problem. Balance law and
liability formula, Nonlinear Anal. 75 (2012) 1104--1123.



\bibitem{BarbaDiViPia}
A. Barbagallo, R. Di Vincenzo, S. Pia, On strong Lagrange duality
for weighted traffic equilibrium problem, Discrete Contin. Dynam.
Syst. 31 (2011) 1097--1113.









\bibitem{[4]}
C.T. Bauch, D.J.D. Earn, Vaccination and the theory of games, Proc.
Natl. Acad. Sci. 101 (2004) 13391–-13394.

\bibitem{[5]}
C.T. Bauch, Imitation dynamics predict vaccinating behaviour, Proc.
R. Soc. Lond. B 272 (2005) 1669–-1675.

\bibitem{BorwLew}
J.M. Borwein, A.S. Lewis, Partially finite convex programming. II.
Explicit lattice models, Math. Programming, 57 (1992) Ser. B 49–-83.


\bibitem{BotCseMoldo}
R.I.Bot,  E.R. Csetnek and A. Moldovan, Revisiting  some dual
theorems via the quasirelative interior in convex optimization, J.
Optim. Theory Appl. 139 (2000) 67--84.


\bibitem{[11]}
G.B. Chapman,  E.J. Coups, Predictors of influenza vaccine
acceptance among healthy adults, Preventitive Medicine 29 (1999)
249-–262.

%

\bibitem{Coj2008}
M.-G. Cojocaru, Dynamic equilibria of group vaccination strategies
in a heterogeneous population, J. Global Optim. 40 (2008) 51--63.


\bibitem{CojGree}
M.-G. Cojocaru, S. Greenhalgh, Dynamic vaccination games and hybrid
dynamical systems, Optim. Eng. 13 (2012) 505--517.



\bibitem{[13]}
M.-G. Cojocaru, C. Bauch, M. Johnston, Dynamics of vaccination
strategies via projected dynamical systems, Bull. Mathemat. Biol. 69
(2007) 1453–-1476.



\bibitem{DG}
P. Daniele,  S. Giuffr\'e, General infinite dimensional duality and
applications to evolutionary network equilibrium problems, Optim.
Lett. 3 (2007) 227--243.

\bibitem{DGI}
P. Daniele, S. Giuffr\'e, G. Idone,  A. Maugeri, Infinite
dimensional duality and applications, Math. Ann. 339 (2007)
221--239.


\bibitem{DGM}
P. Daniele, S. Giuffr\'e, A. Maugeri, Remarks on general infinite
dimensional duality with cone and equality contraints, Comm. Appl.
Anal. 13 (2009) 567--578.

\bibitem{D}
M.B. Donato, The infinite dimensional Lagrange miltipliers rule for
the convex optimization problems, J. Funct. Anal. 261 (2011)
2083--2093.

\bibitem{DonMil}
M.B. Donato, M. Milasi, Lagrangean variables in infinite dimensional
spaces for a dynamic economic equilibrium problem, Nonlinear Anal.
74 (2011) 5048--5056.


\bibitem{DonMauMilVit}
M.B. Donato, A. Maugeri, M. Milasi, C. Vitanza, Duality theory for a
dynamic Walrasian pure exchange economy, Pac. J. Optim. 4 (2008)
537--547.

\bibitem{[20]}
N. Durbach, They might as well brand us: working class resistance to
compulsory vaccination in Victorian England, Soc. Hist. Med. 13
(2000) 45–-62.


\bibitem{Fragnelli1}
G. Fragnelli, An age-dependent population equation with diffusion
and delayed birth process, International Journal of Mathematics and
Mathematical Sciences 20 (2005) 3273--3289.


\bibitem{Fragnelli2}
G. Fragnelli, P. Martinez, J. Vancostenoble, Qualitative properties
of a population dynamics system describing pregnancy, Math. Model
Methods Appl. Sci. 15 (2005) 507--554.





%

\bibitem{[25]}
K.P. Goldstein, T.J. Philipson,  H. Joo, R.S. Daum, The effect of
epidemic measles on immunization rates, JAMA 276 (1996) 56–-58.



\bibitem{[29]}
N. Lashuay, T. Tjoa,  M.L.Z. de Nuncio, M. Franklin,  J. Elder, M.
Jones, Exposure to immunization media messages among African
American parents, Prev. Med. 31 (2000) 522–-528.


\bibitem{MauRac}
A. Maugeri, F. Raciti, On existence theorems for monotone and
nonmonotone variational inequalities. J. Convex Anal. 16 (2009) 899--911.


\bibitem{MauRaciti}
A. Maugeri, F. Raciti, Remarks on infinite dimensional duality, J.
Global Optim. 46 (2010) 581--588.

%
%
%

\bibitem{R4}
M.A. Ragusa, Embeddings for Morrey-Lorentz spaces, J. Optim. Theory
Appl. 154 (2012) 491--499.

\bibitem{R5}
M.A. Ragusa, Necessary and Sufficient condition for a VMO function,
Appl. Math. Comput. 218 (2012) 11952–-11958.



\end{thebibliography}
\end{document}